\documentclass[a4paper, 11 pt, twoside]{amsart}
\usepackage{srollens-en}[2011/03/03]
\usepackage[left = 3.5cm, right = 3.5cm, headsep = 6mm,
footskip = 10mm, top = 35mm, bottom = 35mm, footnotesep=5mm, headheight =
2cm]{geometry}

\usepackage[usenames, dvipsnames]{xcolor}

\usepackage{tikz, tikz-cd}
\usepackage{cancel}

\usepackage[unicode,bookmarks, pdftex, backref = page]{hyperref}
\hypersetup{colorlinks=true,citecolor=NavyBlue,linkcolor=NavyBlue,urlcolor=Orange, pdfpagemode=UseNone, breaklinks=true}

\usepackage{enumitem}
\setlist[description]{leftmargin=0cm,  labelindent=\parindent}

\DeclareMathOperator{\mult}{mult}

\DeclareFontFamily{OT1}{rsfs}{}
\DeclareFontShape{OT1}{rsfs}{n}{it}{<-> rsfs10}{}
\DeclareMathAlphabet{\curly}{OT1}{rsfs}{n}{it}

\usepackage{changes}


\newcommand{\bC}{\ensuremath{\mathbb{C}}}
\newcommand{\deq}{\stackrel{\text{def}}{=}}

\title[Seshadri constants on multiple planes]{Simple cyclic covers of the plane and  Seshadri constants of some general hypersurfaces in weighted projective space}

\author{S\"onke Rollenske}
\address{S\"onke Rollenske\\FB 12/Mathematik und Informatik\\
Philipps-Universit\"at Marburg\\
Hans-Meerwein-Str. 6\\
35032 Marburg\\
Germany}
\email{rollenske@mathematik.uni-marburg.de}
\author{Alex K\"uronya}
\address{Alex K\"uronya \\ Institut f\"ur Mathematik\\ Goethe Universit\"at Frankfurt \\ Robert-Mayer-Str. 6--10.\\ D-60325 Frankfurt am Main \\ Germany}
\email{kuronya@math.uni-frankfurt.de}

\dedicatory{Dedicated to Fabrizio Catanese on the occasion of his 70th birthday.}

\subjclass[2010]{14C20; 14J25}
 \keywords{Seshadri constant, hypersurface, multiple plane}
\begin{document}

\begin{abstract}
Let $X \subset \IP(1,1,1,m)$ be a general hypersurface of degree $md$ for some  for $d\geq 2$ and $m\geq 3$.
We prove that  the  Seshadri constant $\epsilon ( \ko_X(1), x)$ at a general point $x\in X$ lies in the interval $\left[\sqrt{d}- \frac d m, \sqrt{d}\right]$ and thus approaches the possibly irrational number $\sqrt d$ as $m$ grows. 

The main step is a detailed study of the case where $X$ is a simple cyclic cover of the plane. 
\end{abstract}

\maketitle
\setcounter{tocdepth}{1}
\tableofcontents

\section*{Introduction}
 
 Seshadri constants measure local positivity of line bundles. In this capacity they have close ties with two major conjectures  in the theory  of smooth complex surfaces: the Segre--Harbourne--Gimigliano--Hirschowitz conjecture \cite{G,DKMS,Ha,Hi,S} and the Bounded Negativity Conjecture \cite{11authors,NCAS}. 
 
 A big line bundle is locally positive at a point, if the global sections of a suitable multiple of it embed an open neighbourhood of the point into projective space \cite{BCL,KL_GeomAsp}. If this is the case then one can try to measure how many 'reasonably different' global sections there are around the reference point. 
 
 One way to quantify local positivity is via Seshadri constants defined for ample line bundles by Demailly \cite{Dem} during his first attempt at Fujita's global generation conjecture.  For a thorough introduction and references we refer the reader to \cite{Bauer99} and  \cite[Chapter 5]{PAGI}. 
 
 Rationality of Seshadri constants is a long-standing open question, and although it is widely assumed to be false, there is no counterexample even in higher dimensions. As examplified by the volume or asymptotic cohomology of a line bundle \cite{BKS} or Newton--Okounkov bodies \cite{KLM},  asymptotic invariants of linear series have a tendency to be rational on surfaces and wildly irrational in higher dimensions. The existing evidence in the case of Seshadri constants is very mixed. 
 
 On the one hand all computed cases of Seshadri constants are rational, and this includes all abelian surfaces, some of which have round nef cones. 
 At the same time  there is no structural result pointing towards rationality, although the  rationality of Seshadri constants would disprove Nagata's conjecture \cite{DKMS}, which is more or less unanimously expected to be true. 
 
 Seshadri constants on general surfaces of degree $d\geq 5$ in projective space are expected  to be integral multiples of $\sqrt{d}$, hence often irrational. There is a closely related  work  centering around Szemberg's conjecture \cite{CrGp,Sz} on primitive solutions to Pell's equation and optimal lower bounds on Seshadri constants. 	
 
Here we initiate a study of a  class of surfaces that avoided being studied from this point of view. 

\begin{custom}[Theorem A] Fix integers $d\geq 2$ and $m\geq 3$. 
 Let $X$ be a smooth hypersurface of degree $md$ in $\IP(1,1,1,m)$ not passing through the singular point. Let $L = \ko_X(1)$, and $x$ a point on $X$. If the pair $(X, x)$ is general then 
 \begin{equation} \label{eq: estimate} 
\sqrt{d}- \frac d m \leq \epsilon(L;x) \leq \sqrt{d} = \sqrt{L^2}.\tag{$\ast$}  
 \end{equation}
In particular, for every integer $d$ and $\delta>0$ there exists  a point $x$ on a polarised surface $(X,L)$ such that $L^2 = d$ and  $\epsilon(L;x) \in [\sqrt{L^2}-\delta, \sqrt{L^2}]$.
\end{custom}

The main step is to prove the estimate in the special case when   $B$ is a very general plane curve of degree $md$,  $X = X_{d,m}\to \IP^2$ the simple cyclic $d$-uple cover branched over $B$ and $L  = \pi^*\mathcal O_{{\mathbb P}^2}(1)$. One can then deduce  \eqref{eq: estimate} directly for a very general point of $X$ using a result of Bauer \cite{Bauer99} and the following generalisation of a result of Cox \cite{cox}, which might be of independent interest.

\begin{custom}[Theorem B]
  	A very general  simple cyclic cover $X$ of the plane  is a smooth surface with Picard number $\rho(X) = 1$.
\end{custom}

%
%
%
%
%
 
\subsection*{Acknowledgements.} We are grateful to Thomas Bauer and Jakob Stix for helpful discussions.  This cooperation started during the \emph{Workshop on local negativity and positivity on algebraic surfaces}  in 2017 in Hannover, we thank the organizers Roberto Laface and Piotr Pokora  for the invitation. The first-named author was partially supported by the LOEWE grant 'Uniformized structures in Arithmetic and Geometry'.  

 \subsection*{Conventions.}
We work over the complex number field. A variety is an integral separated scheme of finite type over $\bC$. A polarized variety $(X,L)$ is a projective variety $X$ equipped with an ample line bundle $L$. 
 
\section{Seshadri constants and submaximal curves}
 
 Here we collect the necessary material about Seshadri constants on surfaces. Good references for this topic are \cite{Bauer99} and  \cite[Chapter 5]{PAGI}. 
 Let $(X,L)$ be a polarised  algebraic surface and $C\subset X$ be a curve containing a point $x$. We then define 
 \[
 \epsilon_{C,x}:= \frac{C.L}{\mult_x(C)}
 \]
 and call $C$ a submaximal curve for $L$ at $x$ if $\epsilon_{C, x}<\sqrt{L^2}$. The \emph{Seshadri constant} $\epsilon(X,L;x)$ (often just $\epsilon(L;x))$ of $L$ at the point $x\in X$ is defined as 
 \begin{equation}\label{eq: Seshadri via nefness}
 \epsilon(X,L;x) \deq \inf_{x\in C} \epsilon_{C,x}\ . 
 \end{equation}
 Equivalently, upon writing $\pi\colon \widetilde{X}\to X$ for the blowing-up of $X$ at $x$ with exceptional divisor $E$, one has
 \[
 \epsilon(L;x) = \sup\{ t>0\,\mid\, \pi^*L-tE \text{ is nef} \}\ . 
 \]
 The definition gives rise to the following quick upper bound:
 \[
 \epsilon(L;x) \leq \sqrt{(L^2)}\ .
 \]
 It is known that if the inequality is sharp then there must exist a submaximal curve $C\subseteq X$. Equivalently, submaximal means that the proper transform $\widetilde{C}$ of $C$ on $\widetilde{X}$ satisfies $((\pi^*L-\epsilon(L;x)E)\cdot \widetilde{C}) = 0$. 
  
 A couple of words about the behaviour of $\epsilon(L;x)$ in its two arguments. As can be seen from its definition as nef threshold on the blow-up, $\epsilon(L;x)$ only depends on the numerical equivalence class of $L$, and 
 \[
 \epsilon(mL;x) = m\cdot \epsilon(L;x)
 \]
 for all positive integers $m$. In particular, if $\rho(X)=1$, then $\epsilon(L;x)$ for the ample generator $L$ of $X$ determines all Seshadri constants at $x\in X$. 
 
 By \cite{Oguiso} Seshadri constant are upper-semicontinuous in $x$. Consequently, for given $L$, the numbers $\epsilon(L;x)$ are constant for very general $x\in X$.
 It is customary to write $\epsilon(L;1)$ for this common value. If $\rho(X)=1$ then we will denote by $\epsilon(X, 1)$ the Seshadri constant of the ample generator of $X$  at a very general point of $X$.	In this case we will call $\epsilon(X;1)$ the \emph{Seshadri constant of the surface $X$}. 
 
Usually one is especially interested in irreducible submaximal curves but it is sometimes convenient to allow $C$ to be any curve containing $x$. 

%
%

Let $H$ be the class of a line in the plane. The proof of the following is an elementary computation using the projection formula.
\begin{lem}\label{lem: no pullback}
	Let $\pi\colon X\to \IP^2$ be a $d$-uple plane (that is, a finite surjective morphism of degree $d$),  with polarisation $L=\pi^*H$. If $C$ is a plane curve then $\pi^*C$ is not submaximal for any point on $X$. 
\end{lem}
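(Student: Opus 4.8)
The plan is to carry out the "elementary computation using the projection formula" that the authors promise. Write $C \subset \mathbb{P}^2$ for the plane curve and let $x \in X$ be any point, with image $y = \pi(x) \in \mathbb{P}^2$. I want to bound $\epsilon_{\pi^*C, x} = \frac{(\pi^*C \cdot L)}{\mult_x(\pi^*C)}$ from below by $\sqrt{L^2}$.

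\emph{First} I would compute the numerator. By the projection formula, $(\pi^*C \cdot L) = (\pi^*C \cdot \pi^*H) = \deg(\pi)\cdot (C\cdot H)_{\mathbb{P}^2} = d\cdot \deg(C)$. Similarly $L^2 = (\pi^*H)^2 = d\cdot H^2 = d$, so $\sqrt{L^2} = \sqrt{d}$. \emph{Second} I would bound the multiplicity from above. Since $\pi$ is finite, $\pi^{-1}(y)$ is a finite set, and in a neighbourhood of $x$ the divisor $\pi^*C$ is the preimage of $C$ under a finite map; the key point is that $\mult_x(\pi^*C) \leq \mult_y(C)$ — pulling back along a finite morphism cannot increase the multiplicity at a point (locally $\pi$ is given by power series with no constant term, and substituting these into a polynomial of order $\mult_y(C)$ yields a series of order at least $\mult_y(C)$). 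Combining, $\epsilon_{\pi^*C,x} = \frac{d\deg(C)}{\mult_x(\pi^*C)} \geq \frac{d\deg(C)}{\mult_y(C)} \geq \frac{d\cdot \mult_y(C)}{\mult_y(C)} = d \geq \sqrt{d} = \sqrt{L^2}$, where I used the elementary fact $\mult_y(C)\leq \deg(C)$ for a plane curve (a line through $y$ meets $C$ in at most $\deg C$ points counted with multiplicity, and at least $\mult_y(C)$ of them sit at $y$) and $d\geq 1$. Hence $\pi^*C$ is not submaximal at $x$.

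The only genuinely delicate point is the inequality $\mult_x(\pi^*C)\leq \mult_y(C)$, and even this is standard: it follows either from the local description of $\pi$ above, or from noting that $\mult_x$ of a divisor can be computed by intersecting with a general curve germ through $x$ and pushing forward. Since the statement only asks for "not submaximal" (i.e.\ $\epsilon_{\pi^*C,x}\geq \sqrt{L^2}$) and not for the sharper $\epsilon_{\pi^*C,x}\geq d$, one could alternatively avoid the multiplicity comparison entirely by a cruder argument; but the computation above gives the clean inequality with essentially no work. I expect no real obstacle here — this lemma is a warm-up whose role is to rule out the "trivial" source of submaximal curves, so that later one only needs to understand curves on $X$ that are not pulled back from the plane.
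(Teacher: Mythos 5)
The central step of your computation is backwards, and it is exactly the delicate point of this lemma. You need an \emph{upper} bound on $\mult_x(\pi^*C)$, but the inequality $\mult_x(\pi^*C)\leq \mult_y(C)$ is false in general, and your parenthetical justification in fact proves the opposite: substituting power series with no constant term into an equation vanishing to order $\mult_y(C)$ shows that the pulled-back equation vanishes at $x$ to order \emph{at least} $\mult_y(C)$, i.e.\ $\mult_x(\pi^*C)\geq \mult_y(C)$. The multiplicity genuinely can jump at ramification points of $\pi$. Concretely, for a simple cyclic cover $w^d=f(u,v)$ branched over $B=\{f=0\}$, let $C=\ell$ be a simple tangent line of $B$ at $y$ and $x$ the unique preimage of $y$: writing $B=\{v=u^2+\dots\}$ locally, $X=\{w^d=v-u^2-\dots\}$ has local coordinates $(u,w)$ and $\pi^*\ell=\{w^d+u^2+\dots=0\}$, so $\mult_x(\pi^*\ell)=2$ while $\mult_y(\ell)=1$. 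Then $\epsilon_{\pi^*\ell,x}=\frac{d}{2}$, which for $d=2,3$ is strictly smaller than $\sqrt{d}=\sqrt{L^2}$. So at such points not only does your inequality fail, the conclusion itself fails, and no argument can establish the statement as literally quantified over \emph{any} point of $X$ when $d$ is small; the statement has to be read (and is only used in the paper) at points away from this phenomenon.

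On the other hand, your computation is correct at every point $x$ where $\pi$ is unramified: there $\pi$ is a local isomorphism, so $\mult_x(\pi^*C)=\mult_{\pi(x)}(C)$ exactly, and your chain $\epsilon_{\pi^*C,x}=\frac{d\deg C}{\mult_y(C)}\geq d\geq\sqrt{d}$, using $\mult_y(C)\leq\deg C$ and the projection formula for $(\pi^*C\cdot\pi^*H)$ and $L^2$, goes through (and this is surely the "elementary computation" the authors intend, since they do not print one). Because the ramification locus is a proper closed subset, this unramified case covers the very general point $x$ at which the lemma is invoked in the proof of Theorem A, so the application is unharmed. To repair your write-up, delete the false claim that finite pullback cannot increase multiplicity and restrict the conclusion to points off the ramification divisor (equivalently, to very general points), or otherwise state explicitly the correct local comparison and its limits.
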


A key ingredient in our argument is the following result of Thomas Bauer, that controls the degree of a submaximal curve. 
\begin{thm}[\protect{Bauer,  \cite[Thm.~4.1]{Bauer99}}]\label{thm: Bauerbound}
	Let $(X,L)$  be a polarised surface and $x\in X$ a very general point. 
	If $C$ is an irreducible submaximal curve for $x$, then 
	\[ L.C < \frac{L^2}{\sqrt{L^2}- \epsilon_{C,x}}.\]
\end{thm}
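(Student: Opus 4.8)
The plan is to reduce the statement to an elementary quadratic inequality and then feed into it two geometric inputs: the Hodge index theorem on $X$, and the fact that a submaximal curve through a \emph{very general} point cannot be rigid. Set $s\deq\sqrt{L^2}$, $m\deq\mult_x(C)$ and $\ell\deq L\cdot C$, so that $\epsilon_{C,x}=\ell/m$; submaximality says $\ell/m<s$. Hence the factor $s-\ell/m$ is positive, and clearing it shows that the desired inequality $L\cdot C<L^2/(s-\epsilon_{C,x})$ is equivalent to the positivity of the quadratic
\[
q(\ell)\ :=\ \ell^2-sm\,\ell+s^2m .
\]
Now $q$ is an upward parabola in $\ell$ with vertex at $\ell=sm/2$ and discriminant $s^2m(m-4)$. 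When $m=1$ the discriminant is negative, so $q>0$ identically and there is nothing to prove; I therefore assume $m\geq 2$.

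Next I reduce everything to one self-intersection estimate: I claim that $\ell\geq s\sqrt{m^2-m}$ whenever $m\geq 2$. Granting this, the theorem follows at once: since $m\geq 2$ one has $\sqrt{m^2-m}\geq m/2$, so $\ell$ lies at or to the right of the vertex of $q$, whence $q$ is non-decreasing on $[s\sqrt{m^2-m},\infty)$ and
\[
q(\ell)\ \geq\ q\bigl(s\sqrt{m^2-m}\bigr)\ =\ s^2m\bigl(m-\sqrt{m^2-m}\bigr)\ >\ 0,
\]
the last inequality because $m^2>m^2-m$. So it remains to prove the claim.

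To establish $\ell\geq s\sqrt{m^2-m}$ I combine the Hodge index theorem with a lower bound on $C^2$. Applied to the ample class $L$, the Hodge index theorem gives $\ell^2=(L\cdot C)^2\geq (L^2)(C^2)=s^2C^2$, so it is enough to show $C^2\geq m(m-1)$. This is precisely where the hypothesis that $x$ be very general is used. The pair $(x,C)$ lies on some irreducible component $T$ of the flag Hilbert scheme parametrising a point of multiplicity $\geq m$ lying on an irreducible curve of $L$-degree $\ell$; as there are only countably many such components, and a very general $x$ avoids the images of all components that fail to dominate $X$, the component $T$ through $(x,C)$ must dominate $X$. Thus $C$ is a member of a family of irreducible curves covering $X$, each bearing a moving point of multiplicity $\geq m$, and for such a family the moving-curves self-intersection estimate of Ein--Lazarsfeld and Xu yields $C^2\geq m(m-1)$. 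Combining, $\ell^2\geq s^2C^2\geq s^2m(m-1)$, which is the claim.

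The only genuinely substantial step is this last ingredient; everything downstream is the Hodge inequality and the behaviour of the quadratic $q$, and the case $m=1$ is a triviality. The work lies in the Hilbert-scheme bookkeeping that upgrades ``$x$ very general'' to ``$C$ moves in a family dominating $X$'' — in particular one must check that the multiplicity and the $L$-degree may be taken constant along $T$ and that the general member of $T$ is again irreducible and submaximal — together with the moving-curves bound $C^2\geq m(m-1)$ itself. Note that if a very general point of $X$ supports no submaximal curve at all, i.e.\ if $\epsilon(L;1)=\sqrt{L^2}$, then the statement is vacuous, so one may freely assume that such a dominating family exists.
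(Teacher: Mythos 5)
The paper itself gives no proof of this statement (it is quoted from Bauer, \cite[Thm.~4.1]{Bauer99}), and your argument is essentially Bauer's original one: very generality of $x$ lets $C$ move in a family dominating $X$, the Ein--Lazarsfeld/Xu moving-singularity bound gives $C^2\ge m(m-1)$, the Hodge index theorem gives $(L\cdot C)^2\ge L^2\,C^2$, and the rest is elementary algebra (Bauer factors $s^2m^2-\ell^2=(sm-\ell)(sm+\ell)\le s^2m$ rather than analysing your quadratic $q$, but the content is identical). Your reduction and estimates check out and the Hilbert-scheme bookkeeping you flag is standard; two minor remarks: submaximality of the general member of the family is not actually needed to apply Xu's lemma (only irreducibility, multiplicity $\ge m$, and the moving point), and for $m\ge 2$ the curve automatically varies along a dominating family, since a fixed curve has only finitely many points of multiplicity $\ge 2$.
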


\section{Simple cyclic multiple planes}
A $d$-uple plane is a finite map $\pi\colon X \to \IP^2$ of degree $d\geq 2$. In general, the structure of such maps is quite complicated if $d>2$ but we only discuss the following simple construction, where $X$ arises by taking the $d$-th root of a section of a  line bundle.  More precisely, let $m\geq 1$ and $B$ be a plane curve of degree  $md$ defined by an equation $f$. 
Then a simple cyclic cover  $X$ is a hypersurface of degree $md$ in $\IP(1,1,1,m)$, with equation 
\begin{equation}\label{eqn:hypsurf in wps}
w^d+f(x,y,z)=0\ .
\end{equation}
Note that $\pi\colon X \to \IP^2$ is a Galois ramified cover with group $\IZ/d\IZ$. Decomposing $\pi_*\ko_X$  into eigensheaves with respect to this action we get a decomposition 
\[\pi_*\ko_X =\bigoplus_{k = 0}^{d-1} \ko_{\IP^2}(-km) = \left( \bigoplus _{k\geq 0 } \ko_{\IP^2}(-km) w^k \right) / (w^d - f),\]
where the first decomposition is as $\ko_{\IP^2}$-module while the second specifies in addition the algebra structure. A local computation shows that $X$ is smooth if and only if $B$ is smooth. 

For a more complex analytic point view on simple cyclic covers see \cite[Ch.~1]{BHPV}, for a more general theory of abelian covers compare \cite{pardini91}.

\begin{defin}
 A simple cyclic $d$-uple plane  is a  ramified cover  
 \[\pi\colon X=X_{d,m}\to \IP^2\] of degree $d$, branched over over a curve $B$ of degree  $dm$ arising as above. 
 We say $X$ is very general if $B$ is very general.
\end{defin}
%
%
%

For lack of convenient reference we sketch a proof of the following.
\begin{prop}[Theorem B] Let $d\geq2$ and $m\geq 3$ be integers. Then a very general simple cyclic $d$-uple plane $\pi\colon X \to \IP^2$ with branch curve $B$ of degree $dm$ has Picard rank $1$. 
 \end{prop}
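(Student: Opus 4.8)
The plan is to run a Noether–Lefschetz type argument in the weighted projective space $\IP(1,1,1,m)$. The surface $X=X_{d,m}$ is a hypersurface of degree $dm$ cut out by $w^d+f(x,y,z)=0$, and for very general $B$ (equivalently, very general $f$) we expect $X$ to behave like a generic member of the family of all such hypersurfaces. The classical Noether–Lefschetz theorem says that a very general surface of degree $\geq 4$ in $\IP^3$ has Picard rank $1$; here I would use the version for (quasi-smooth) hypersurfaces in weighted projective spaces, as developed by e.g.\ Steenbrink, Dolgachev, and in the form used by Cox \cite{cox} for the ordinary projective case. Concretely: first, identify a single member $X_0$ of the family (or a nearby Lefschetz pencil) whose cohomology and whose relation to $H^2(\IP(1,1,1,m))$ one understands, then show that the monodromy of the family acts on the primitive part of $H^2(X,\bQ)$ with no nonzero invariants, so that by the Lefschetz $(1,1)$-theorem plus the theorem of the fixed part, no algebraic class outside the hyperplane class $H=\ko_X(1)$ survives to the very general member.

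The key steps, in order. (1) Check that for very general $B$ the surface $X$ is smooth (this is noted in the excerpt: $X$ smooth $\iff$ $B$ smooth) and that $\rho(X)\geq 1$ is witnessed by $L=\pi^*H=\ko_X(1)$, so it suffices to prove $\rho(X)\leq 1$. (2) Reduce to a Noether–Lefschetz statement: since $X$ does not meet the singular point of $\IP(1,1,1,m)$, a Lefschetz hyperplane argument (or the weighted analogue) gives that $H^2_{\mathrm{prim}}(X,\bQ)$ carries a Hodge structure of weight $2$ with $h^{2,0}(X)>0$ precisely when $\deg=dm$ is large enough; one must verify $p_g(X)=h^{2,0}(X)>0$, which follows from the adjunction/eigensheaf decomposition $\pi_*\ko_X=\bigoplus_{k=0}^{d-1}\ko_{\IP^2}(-km)$ together with $K_X=\pi^*(K_{\IP^2}+(d-1)\cdot\tfrac{dm}{d}H)=\pi^*\ko_{\IP^2}(dm-m-3)$, so $p_g>0$ as soon as $dm-m-3\geq 0$, i.e.\ exactly in the range $d\geq 2,\ m\geq 3$ (with the single borderline case $d=2,m=3$ to be checked by hand, where $dm-m-3=0$ forces $p_g$ to be computed directly — note this is why the hypothesis is $m\geq 3$ and not $m\geq 2$, and one may need $d\geq 3$ or a separate argument when $m=3$). (3) Establish that the family of such $X$ over the (open, irreducible) parameter space of smooth degree-$dm$ curves $B$ has big monodromy: the period map is nonconstant because $h^{2,0}>0$, and then either invoke a general irreducibility/big-monodromy theorem for such linear systems, or construct an explicit Lefschetz pencil inside the family and use the Picard–Lefschetz formula to show the vanishing cycles span $H^2_{\mathrm{prim}}$ and form a single orbit, hence the monodromy invariants in $H^2_{\mathrm{prim}}(X,\bQ)$ vanish. (4) Conclude: by the theorem of the fixed part, any class algebraic on the very general $X$ lies in the monodromy-invariant subspace of $H^2$, which by (3) is just $\bQ\cdot[H]$; combined with the Lefschetz $(1,1)$-theorem this gives $\mathrm{NS}(X)_{\bQ}=\bQ\cdot[H]$, i.e.\ $\rho(X)=1$.

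The main obstacle I expect is step (3), the big-monodromy statement, in the weighted setting: one must ensure that the linear system $|\ko_{\IP(1,1,1,m)}(dm)|$ restricted to the locus of simple cyclic covers (i.e.\ polynomials of the special shape $w^d+f(x,y,z)$, which is \emph{not} the full linear system of degree $dm$) still has sufficiently rich monodromy. The honest way around this is to note that the parameter space is precisely the space of plane curves $B$ of degree $dm$, and to transport the monodromy question to a statement about the family of cyclic covers of $\IP^2$ branched along a varying very general curve — here one can lean on the explicit description of $H^2(X)$ via eigensheaves and on Cox's method \cite{cox} for ordinary cyclic covers, adapting his Jacobian-ring computation of the infinitesimal variation of Hodge structure to show the period map is an immersion at a general point, which already suffices to kill the connected monodromy invariants. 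The borderline numerical cases (small $d$, $m=3$) and the verification that the argument never requires $X$ to be a $K3$ or abelian surface (where $\rho$ could jump) are the remaining fiddly points, handled by the inequality $dm-m-3>0$ for all $(d,m)$ with $d\geq2,m\geq3$ except $(2,3)$, which is treated separately.
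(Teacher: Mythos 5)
The route you propose (monodromy plus the theorem of the fixed part) is not the paper's, and as written it has a genuine gap at exactly the point you flag as the main obstacle: your step (3). The family in question is not the full linear system $|\ko_{\IP(1,1,1,m)}(md)|$ but only the locus of equations of the special shape $w^d+f(x,y,z)$, so the classical Lefschetz-pencil/vanishing-cycle argument is not available; worse, the whole family carries a fibrewise $\IZ/d\IZ$-action, so the monodromy commutes with it and preserves the eigenspace decomposition of $H^2_{\mathrm{prim}}$, hence the monodromy representation is not irreducible and ``big monodromy'' in the naive sense fails. Your fallback --- adapting Cox's Jacobian-ring computation to show that the period map is an immersion, claiming this ``already suffices to kill the connected monodromy invariants'' --- is not correct: immersivity of the period map (injectivity of its differential) neither rules out monodromy-finite primitive Hodge classes nor prevents a Hodge class from persisting over the whole base; for instance, the universal family over a Noether--Lefschetz component can have immersive period map while every fibre carries the extra class. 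The criterion that actually does the job, and the one the paper (following Cox and Carlson--Green--Griffiths--Harris) verifies, is the surjectivity of the multiplication map $H^1(X,\Theta_X)_0\otimes H^{2,0}(X)\to H^{1,1}(X)$, where $H^1(X,\Theta_X)_0$ is the image of the Kodaira--Spencer map of the \emph{restricted} family.

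Proving that surjectivity for the restricted family is the real content of the paper's proof, and it is an explicit computation your sketch does not supply: with $S'=\IC[x,y,z]\subset S=\IC[x,y,z,w]$, $F=w^d+f$, $R=S/(\nabla F)=S/(w^{d-1},\nabla f)$ and $T=S/(w^{d-1})$, the map in question is a quotient of the multiplication $S'_{md}\otimes T_{md-m-3}\to T_{2md-m-3}$ (only $f$ varies, so the tangent space contributes $S'_{md}$ rather than all of $S_{md}$); decomposing $T_k=\bigoplus_{i=0}^{d-2}w^i\cdot S'_{k-im}$ and using surjectivity of multiplication maps in the polynomial ring $S'$ yields the claim precisely when $md-m-3-(d-2)m=m-3\geq 0$. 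This is where the hypothesis $m\geq 3$ really enters; in your write-up $m\geq 3$ only appears through the much weaker condition $p_g>0$, i.e.\ $dm-m-3\geq 0$. Your steps (1), (2) and (4) are fine as far as they go, but without a proof of (3) --- or of the infinitesimal surjectivity that replaces it --- the argument does not close.
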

 This is proved for the full family of weighted hypersurfaces in \cite{cox} and we follow the proof very closely. The case of degree $2$ is covered in \cite{Buium}.
 We will consider $d$-uple planes $X_{d,m}$ as hyperplanes in $\IP(1,1,1,m)$ with equation (\ref{eqn:hypsurf in wps}). As explained in the first paragraph of \cite[Proof of Theorem]{cox} (see also  \cite[III (a)]{CGGH}), in order to verify that  $\rho(X_{d,m})=1$ for the general hypersurface, it suffices to check that the map
 \[
 H^1(X,\Theta_X)_0\otimes H^{2,0}(X) \longrightarrow H^{1,1}(X)
 \]
 is  surjective, where $H^1(X,\Theta_X)_0$ stands for the image of $H^1(X,\Theta_X)$ under the Kodaira--Spencer map of the family of degree $d$ hypersurfaces in $\IP(1,1,1,m)$ with equation (\ref{eqn:hypsurf in wps}). The case of double covers is treated in \cite{Buium}.
 
 As far as the link between various cohomology spaces occurring above  and  the graded polynomial ring $S\deq \bC[x,y,z,w]$ goes, write
 \[
 R \deq  S/(\nabla F) = S/(w^{d-1}, \nabla f)\ ,
 \]
 then 
 \begin{eqnarray*}
 	H^{2,0}(X) & \simeq  & R^{dm-(1+1+1+m)} = R^{dm-m-3}\ ,\\
 	H^{1,1}(X) & \simeq  & R^{2dm-m-3}\ , \text{ and }\\
 	H^{1}(X,\Theta_X)_0 & \simeq  & R^{dm}\ .
 \end{eqnarray*}
 For proofs see the reference in \cite{cox}. 

\begin{proof}
Consider the weighted polynomial rings $S'= \IC[x,y, z]\subset S = \IC[x,y,z,w]$, where $x,y,z$ have degree $1$ and $w$ has degree $m$. 
 Let $f$ be the equation defining $B$ so that $X$ is cut out by $F=w^d+f\in S = \IC[x,y,z,w]$. Consider the Milnor algbra 
 \[R = S/(\nabla F) = S/(w^{d-1}, \nabla f),\]
 which is the homomorphic image of $T = S/(w^{d-1})$.
 Let $V$ be the tangent space to the family of $d$-uple planes at $X$. Then as in \cite{cox} the result follows if the composition
\[\begin{tikzcd} V\tensor H^{2,0}(X) \rar{\kappa\tensor\id} &  H^1(\Theta_X)\tensor H^{2,0}(X) \rar& H^{1,1}(X)
   \end{tikzcd}\]
is surjective, where $\kappa$ is the Kodaira-Spencer map. Considering $d$-uple planes as particular hypersurfaces and argueing as in \cite{cox}, this multiplication map fits into a commutative  diagram
\[
 \begin{tikzcd}
  S'_{md}\tensor T_{md-m-3} \rar\dar[twoheadrightarrow] & T_{2md-m-3}\dar[twoheadrightarrow]\\
  V \tensor R_{md-m-3}\dar \rar & R_{2md-m-3}\dar[equal]\\
  R_{md} \tensor R_{md-m-3}\dar{\isom} \rar & R_{2md-m-3}\dar{\isom}\\
   H^1(\Theta_X)\tensor H^{2,0}(X) \rar& H^{1,1}(X)
 \end{tikzcd}
\]
Now note that as a graded  $S'$-module $T_k = \bigoplus_{i=0}^{d-2} w^i\cdot S'_{k-im}$
 and all summands on the right hand side are non-zero as soon as $k\geq m(d-2)= m(d-1)-m$. Since the multiplication maps in the standard polynomial ring $S'$ are surjective, the surjectivity of the first row of the diagram, and hence our claim, follows if $m\geq 3$. 
\end{proof}

We would like to have that the Picard group is actually generated by $L$, at least for large $m$,  but we do not know a convenient reference for this. For our purpose the following simple observation is enough. 
\begin{cor}\label{cor: dC}
Let $\pi \colon X \to \IP^2$ be a very general simple cyclic $d$-uple plane and $L  = \pi^*\mathcal O_{{\mathbb P}^2}(1)$.
If $C$ is any curve on $X$ then there exists a $k$ such that $dC \in |kL|$. 
\end{cor}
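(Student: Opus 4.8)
The plan is to exploit the $\IZ/d\IZ$-symmetry of $\pi$ together with the equality $\rho(X)=1$ coming from the Proposition. Write $G=\mathrm{Gal}(X/\IP^2)\cong\IZ/d\IZ$ and $R=\pi^{-1}(B)_{\mathrm{red}}$ for the ramification divisor. The argument hinges on the identity of divisors on $X$
\[
\sum_{g\in G} g^*C \ =\ \pi^*\bigl(\pi_*C\bigr) ,
\]
valid for every effective divisor $C$: the left-hand cycle is $G$-invariant, and one checks it equals $\pi^*\pi_*C$ by a local computation that separates the components of $C$ lying in $R$ (over which $\pi$ is totally ramified, so each contributes $d$ times on the left against $\pi^*$ of one copy on the right) from those that do not (over which $\pi$ is generically étale, and $G$ permutes the fibre transitively); compare \cite[Ch.~1]{BHPV} for the local structure of simple cyclic covers.

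Granting this, the steps are as follows. Since $\pi$ is finite, $\pi_*C$ is a non-zero effective divisor on $\IP^2$, so $\pi_*C\sim nH$ with $n=\deg\pi_*C>0$, and hence $\sum_{g\in G}g^*C\sim n\,\pi^*H=nL$. I would then promote this to $dC\sim nL$. The Proposition gives $\rho(X)=1$, and the decomposition $\pi_*\ko_X=\bigoplus_{k=0}^{d-1}\ko_{\IP^2}(-km)$ yields $h^1(\ko_X)=0$, so $\mathrm{Pic}^0(X)=0$ and $\mathrm{Pic}(X)=\mathrm{NS}(X)$ has rank one; \emph{provided it is torsion-free} it is isomorphic to $\IZ$, so each $g^*$ acts on it by $\pm\mathrm{id}$, and since $\pi\circ g=\pi$ forces $g^*L=L\neq0$ this sign is $+$. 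Thus $g^*C=C$ in $\mathrm{Pic}(X)$ for all $g$, whence $dC=\sum_{g\in G}g^*C\sim nL$; as $dC$ is effective, $dC\in|nL|$, which is the assertion with $k=n$.

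The one point requiring work is the torsion-freeness of $\mathrm{Pic}(X)$. Because $h^1(\ko_X)=0$, the first Chern class embeds $\mathrm{Pic}(X)$ in $H^2(X,\IZ)$ with matching torsion subgroups, and by the universal coefficient theorem together with $b_1(X)=0$ this torsion is $H_1(X,\IZ)=\pi_1(X)^{\mathrm{ab}}$; so it suffices to show $X$ is simply connected. I would do this by the following classical route. A very general plane curve $B$ of degree $dm$ is smooth, hence $\pi_1(\IP^2\setminus B)\cong\IZ/dm\IZ$ by the theorem of Zariski (see also Deligne and Fulton); the unramified part $X\setminus R\to\IP^2\setminus B$ is the cyclic cover attached to the reduction map $\IZ/dm\IZ\to\IZ/d\IZ$, so $\pi_1(X\setminus R)\cong d\IZ/dm\IZ$; and a meridian of the smooth connected divisor $R\cong B$ maps under $\pi$ to $d$ times a generator of $\IZ/dm\IZ$ (ramification index $d$), which generates $d\IZ/dm\IZ$. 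By van Kampen, applied to a tubular neighbourhood of $R$, killing this meridian kills all of $\pi_1(X\setminus R)$, so $\pi_1(X)=1$.

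I expect this last point — the torsion-freeness, equivalently the simple connectedness of $X$ — to be the main obstacle; everything else is bookkeeping with the projection formula and the Galois action. Two remarks. For the application to Seshadri constants only the numerical statement $dC\equiv nL$ is actually used, and that follows from $\rho(X)=1$ alone with no input from $\pi_1$. And in place of the fundamental-group computation one may invoke a Lefschetz-type theorem for quasismooth weighted hypersurfaces avoiding the singular locus of the ambient space, which gives $\pi_1(X)\cong\pi_1(\IP(1,1,1,m))=1$ directly.
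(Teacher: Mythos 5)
Your argument is correct and follows essentially the same route as the paper: the divisor identity $\sum_{g}g^*C=\pi^*\pi_*C$ combined with $\rho(X)=1$ forcing the Galois group to act trivially on the Picard group, so that $dC\sim kL$ with $k=\deg\pi_*C$. The one genuine addition is your justification that $\operatorname{Pic}(X)$ is torsion-free (via $q=0$ and simple connectedness of $X$, by the Zariski/van Kampen computation or a Lefschetz-type theorem for quasismooth weighted hypersurfaces); the paper simply asserts that automorphisms act trivially on the Picard group, which strictly speaking presupposes exactly this point, so your care here is warranted rather than superfluous. One small caveat: your closing remark that only the numerical equivalence $dC\equiv kL$ is used later is not accurate --- the proof of Theorem~A uses membership $dC\in|kL|$ together with the projection-formula computation of $H^0(X,kL)$ to realise $dC$ as the pullback of a plane curve before invoking Lemma~\ref{lem: no pullback}.
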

\begin{proof}
 Let $\sigma$ be a generator of the Galois group of $\pi$. Since the Picard rank is $1$, the automorphisms act trivially on the Picard group  and we have 
 \[d C \sim \sum _{k = 1}^d (\sigma^k)^*C = \pi^*\pi_*C \in |\pi^*\ko_{\IP^2}(k)| = |kL|\]
 for some $k$. 
\end{proof}

\section{Proof of Theorem A}

We start by proving the estimate \eqref{eq: estimate} for a very general  simple cyclic $d$-uple plane $\pi\colon X \to \IP^2$ branched over a curve of degree $md$. 

 Assume  $C$ is an irreducible curve which is submaximal for a very general point $x\in X$ and such that $\sqrt{L^2} - \epsilon_{C,x}>\frac dm$.  By  Corollary \ref{cor: dC} we have  $dC \in |kL|$ and $dC$ is submaximal as well.  Then by Bauer's bound, Theorem \ref{thm: Bauerbound}, we have
 \[k = \frac 1d k L^2 = L.C<\frac{d}{\sqrt d - \epsilon_{C,x}} <  m .\]
This inequality implies by the projection formula
\[H^0(X, kL) \isom \bigoplus_{i=0}^{d-1}H^0(\IP^2, (k-im)H) = H^0(\IP^2, kH), \]
that is, every curve of degree at most $m$ is pullback of a plane curve. 
By  Lemma \ref{lem: no pullback} the curve $dC$ is not submaximal --- a contradiction.

Now consider the family $\kx \subset |\ko_{\IP(1,1,1,m)}(md)|$ of smooth hypersurfaces of degree $md$ in $\IP(1,1,1,m)$ not containing the singular point and let
\[
\begin{tikzcd}\ku \rar[hookrightarrow]\dar & \IP(1,1,1,m) \times \kx \\  \kx
 \end{tikzcd}
\]
be the universal family. 
Let $\tilde\ku\to \ku\times_\kx \ku$ be the blow-up of the diagonal and $E$ the exceptional divisor of the blow up. 
Then 
\[ f \colon \tilde \ku \to \ku\]
is a smooth family of surfaces. If we consider a point in $\ku$ as a pair $(X,x)$ of a hypersurface $X$ and a point $x\in X$, then $\tilde X = \inverse f(X,x)$ is exactly the surface $X$ blown up at the point $x$, with exceptional divisor $E_x = E\cap \tilde X$. 
Let $\kl$ be the pullback of $\ko_{\IP(1,1,1,m)}(1)$ to $\ku$; it is a line bundle on every fibre by our assumptions.

Now note that the simple cyclic $d$-uple planes branched over a curve of degree $md$ are contained in this family and in particular, there is a point $(X,x)\in \ku$ such that $\epsilon(\kl|_X;x) \geq \sqrt d -\frac dm$. 
By the characterisation of Seshadri constants via nefness on the blow up in \eqref{eq: Seshadri via nefness} we have that 
\[\left(f^*\kl -\epsilon(\kl|_X;x) E\right)|_{\inverse f(X,x)}\]
is nef. By \cite[Corollary 4]{Moriwaki} there is an open subset $\ku'\subset \ku$ containing $(X,x)$ and such that for every $(Y,y)\in \ku'$ the line bundle $\kl|_Y-\epsilon(\kl|_X;x)E_y$ is nef. 

Using \eqref{eq: Seshadri via nefness} again, we see that $\epsilon(\kl|_X;x)\leq \epsilon(\kl|_Y;y)$. Therfore the estimate \eqref{eq: estimate} holds for all pairs in the open subset $\ku'$.
This concludes the proof of Theorem~A.  \qed

\end{document}